\providecommand\@dotsep{5}
\def\listtodoname{List of Todos}
\def\listoftodos{\@starttoc{tdo}\listtodoname}
\numberwithin{equation}{section}
\newtheorem{theorem}{Theorem}[section]
\newtheorem{proposition}[theorem]{Proposition}
\newtheorem{lemma}[theorem]{Lemma}
\newtheorem{remark}{Remark}
\def\N{\mathbb{N}}
\def\R{\mathbb{R}}
\def\R {{\rm I}\hskip -0.85mm{\rm R}}
\def\N {{\rm I}\hskip -0.85mm{\rm N}}
\def\ol{\overline}
\def\ul{\underline}
\title[Nonlocal problems with singular nonlinearity]
{
  Positive solutions for  a class of nonlocal problems \\ 
  with possibly singular nonlinearity
}
\author[L. Gasi\'nski]{Leszek  Gasi\'nski}
\author[J. R. Santos Jr.]{Jo\~ao R. Santos Junior}
\author[G. Siciliano]{Gaetano Siciliano }
\address[L. Gasi\'nski]
{\newline\indent Department of Mathematics
\newline\indent
Pedagogical University of Cracow
\newline\indent
Podchorazych 2, 30-084 Cracow, Poland}
\email{\href{mailto: leszek.gasinski@up.krakow.pl }{leszek.gasinski@up.krakow.pl}
}
\address[J. R. Santos Jr.]
{\newline\indent Faculdade de Matem\'atica
\newline\indent
Instituto de Ci\^{e}ncias Exatas e Naturais
\newline\indent
Universidade Federal do Par\'a
\newline\indent
Avenida Augusto corr\^{e}a 01, 66075-110, Bel\'em, PA, Brazil}
\email{\href{mailto: joaojunior@ufpa.br }{joaojunior@ufpa.br}
}
\address[G. Siciliano]
{\newline\indent Departamento de Matem\'atica
\newline\indent
Instituto de Matem\'atica e Estat\'istica
\newline\indent
 Universidade de S\~ao Paulo
\newline\indent
Rua do Mat\~ao 1010,  05508-090, S\~ao Paulo, SP, Brazil }
\email{\href{mailto:sicilian@ime.usp.br}{sicilian@ime.usp.br}
}
\thanks{Jo\~ao R. Santos was partially
supported by CNPq 306503/2018-7, Brazil.
Gaetano Siciliano  was partially supported by
Fapesp 2018/17264-4 and 2016/23746-6, Capes and CNPq 304660/2018-3, Brazil. }
\subjclass[2010]{ 
35J50,  	
35J57, 
	35J70. 
	}
\keywords{Problems with nonlocal terms, multiple asymptotic behaviour, multiplicity result, fixed point methods}
\begin{document}

\maketitle
\begin{abstract}
  We study a class of elliptic problems with homogeneous
  Dirichlet boundary condition and a nonlinear
  reaction term $f$ which is nonlocal depending on the $L^{p}$-norm of the unknown function.
  The nonlinearity $f$ can make the problem degenerate since it may even
  have multiple singularities  in the nonlocal variable.
  We use  fixed point arguments for an appropriately defined solution map,
  to produce multiplicity of classical positive solutions with ordered norms.
\end{abstract}

\bigskip

\maketitle
\begin{center}
\begin{minipage}{12cm}
\tableofcontents
\end{minipage}
\end{center}

\bigskip

\section{Introduction}


  In this paper we consider the existence of multiple positive solutions for the following
class of nonlocal problems:
\begin{equation}\label{P1}\tag{$P_{I}$}
\left \{ \begin{array}{ll}
  -\Delta u = f\left(u, \displaystyle\int_{\Omega} | u|^{p}dx \right) & \mbox{in $\Omega$,}\\
  u=0 & \mbox{on $\partial\Omega$,}
\end{array}\right.
\end{equation}
  where $p\geq1$, $\Omega$ is a bounded domain in $\R^{N}$ with smooth boundary
  and  $f$ is a continuous function with a  suitable  behaviour in the second variable.

\medskip

  In the last decades partial differential equations involving nonlocal terms have attracted a great deal of attention
  of the mathematical community for different reasons. Indeed, equations involving nonlocal terms are usually more realistic
  to model different situations in nature, see Furter-Grinfeld \cite{FG} for a comparison between local and nonlocal models in population dynamics,
  or Kirchhoff \cite{Kir} for an improvement (based on the insertion of a nonlocal term) of the classical D'Alembert's wave equation
  in string deformation theory. From a mathematical point of view
  nonlocal equations are challenging since, in general, the presence of a nonlocal term
  makes the equation much more complicated. In many cases the known techniques
  cannot be applied in a straightforward way, so the development of alternative approaches is required.

Regarding stationary partial differential equations in very recent years two classes of problems involving nonlocal terms in the diffusion operator have been a quite active research field, namely the Kirchhoff and the generalised Carrier problems. A basic prototype of the first one is
\begin{equation}\label{P3}\tag{$KP$}
\left \{ \begin{array}{ll}
  -a\left(\displaystyle\int_{\Omega}|\nabla u|^{2}dx\right)\Delta u = f(u) & \mbox{in $\Omega$,}  \medskip \\
  u=0 & \mbox{on $\partial\Omega$,}
\end{array}\right.
\end{equation}
  where $a$ is usually a continuous real function bounded away from zero.
  Problem \eqref{P3} is the stationary version of a hyperbolic model to small transversal vibrations
  of elastic membranes, see Kirchhoff \cite{Kir} for details.
  On the other hand, generalised Carrier problems are of type
\begin{equation}\label{P4}\tag{$GCP$}
\left \{ \begin{array}{ll}
  -a\left(\displaystyle\int_{\Omega}|u|^{p}dx\right)\Delta u = f(u) & \mbox{in $\Omega$,} \medskip \\
  u=0 & \mbox{on $\partial\Omega$,}
\end{array}\right.
\end{equation}
  where $a$ is again a continuous real function. When $p=2$, problem \eqref{P4} is the stationary case
  of another hyperbolic model of transversal vibrations (not necessarily small)
  of elastic membranes which was introduced by Carrier in \cite{Car}, in the unidimensional case.
  As $p=1$, \eqref{P4} also appears as the stationary version of parabolic models
  in the study of the dispersion of biological populations where the diffusion rate
  of the specie takes into account the whole population. See, for instance, Chipot-Rodrigues \cite{CR}.
  For papers involving operators as in \eqref{P4} with $p>1$ arbitrary,
  we refer the reader to Figueiredo-Sousa-Morales-Rodrigo-Su\'arez \cite{FMS} and references therein.

 \medskip

  Recently, the above problems have been studied also in the degenerate case, that is,
  when $a$ vanishes in some points.
  Concerning \eqref{P3}, this line of research started with the papers of
  Ambrosetti-Arcoya \cite{AA1,AA2}.
  See also Santos J\'unior-Siciliano \cite{SS} who establish a multiplicity result of positive solutions
  depending on the number of degeneration points of the function $a$.
  For problem \eqref{P4},
  some interesting results can be found in the recent papers of
  Delgado-Morales-Rodrigo-Santos J\'unior-Su\'arez \cite{DMSS} and
  Gasi\'nski-Santos J\'unior \cite{GS1,GS2}.

\medskip

  Starting with the above considerations, our aim here is to study existence of positive
  solutions for (possibly) degenerate problems for which problem 
  \eqref{P4} is just a particular case.
  The main goal of this paper is to prove that when the reaction term involved in the equation depends on the 
  $L^{p}$-norm of the unknown function and it allows singularities in some points of its domain,
  the same sort of multiplicity result (as those observed in degenerate Kirchhoff
  and Carrier problems, see
  Santos J\'unior-Siciliano \cite{SS} and
  Gasi\'nski-Santos J\'unior \cite{GS1}) holds.

  It is worth to point out that such kind of results are much more general than those obtained for degenerate
  Carrier problems. In fact, if we choose, the nonlinearity $f$
  in \eqref{P1} of the form
$$
  f\left(u, \int_{\Omega} |u|^{p}dx \right )=\frac{g(u)}{a\left(\displaystyle\int_{\Omega} |u|^{p}dx\right)},
$$
  with $a$ being a continuous real function which vanishes in some points,
  then $f(u,\cdot)$ exhibits a diverging behaviour and we fall in the degenerate 
  Carrier problems \eqref{P4}.

\subsection{Statement of the main result - Theorem \ref{main1}} \label{sec:Prelim}

  First let us introduce some notations.
  Along the paper,
  \begin{itemize}
  \item $\|\cdot\|$ denotes the $H_{0}^{1}(\Omega)$-norm,
  \item   $|\cdot|_{r}$ is the $L^{r}(\Omega)$-norm,
  \item   $\lambda_{1}$ is the first eigenvalue of the minus Laplacian operator in $\Omega$
  with zero Dirichlet boundary condition,
  \item   $\varphi_{1}$ is the positive eigenfunction associated to $\lambda_{1}$ normalized in the $H_{0}^{1}(\Omega)$-norm,
  \item $e_{1}$ is the positive eigenfunction associated to $\lambda_{1}$ normalized in the $L^{\infty}(\Omega)$-norm,
  \item $C_{1}>0$ stands for best constant of the Sobolev embedding of $H_{0}^{1}(\Omega)$ into $L^{1}(\Omega)$;
\item  $|\Omega|$ is the Lebesgue measure of $\Omega$.
  \end{itemize}

  \medskip

We assume that $f$ is a function having the following behaviour:

\begin{enumerate}[label=($f_{\arabic*}$),ref=$f_{\arabic*}$,start=0]
\item\label{f_{0}} there exist positive numbers $0=:t_{0}<t_{1}<t_{2}<\ldots<t_{K}$ ($K\geq 1$) such that
     $f:\R\times \mathcal{A}\to\R$ is continuous, where $\mathcal{A}=(0,\infty)\backslash\{t_{1}, \ldots, t_{K}\}$.
     For each $k\in\{1, \ldots, K\}$ and fixed $\alpha\in (t_{k-1}, t_{k})$,
     $f(\cdot,\alpha)\in C^1(\R)$,
     there exists $s_\alpha>0$ such that $f(s_{\alpha}, \alpha)=0$ and
     $f(s, \alpha)>0$ for $s\in (0, s_{\alpha})$.
  Moreover $\sup\limits_{\alpha\in Z}s_{\alpha}<\infty$, for each compact set $Z\subset (t_{k-1}, t_{k})$ and $k\in\{1, \ldots, K\}$;
 \item\label{f_{1}} for each $k\in\{1, \ldots, K\}$ and any $s\in (0,s_{\alpha})$, $\lim\limits_{\alpha\to t_{k}}f(s, \alpha)\in[\lambda_{1},+\infty]$;
 \item\label{f_{2}} for each $k\in\{1, \ldots, K\}$ and fixed $\alpha\in (t_{k-1}, t_{k})$, the map $(0, s_{\alpha})\ni s\mapsto f(s, \alpha)/s$ is decreasing;
 \item\label{f_{3}}$\inf\limits_{\alpha\in \mathcal{A}}\gamma_{\alpha}>\lambda_1$, where $\gamma_\alpha:=\lim\limits_{s\to 0^{+}}f(s, \alpha)/s$.
\end{enumerate}


\begin{remark} \label{rem_hyp123}\rm
  \textbf{(a)} Hypothesis \eqref{f_{1}} makes the problems \eqref{P1} 
   singular in the nonlocal term
  whenever $\lim\limits_{\alpha\to t_{k}}f(s, \alpha)=+\infty$.\\
  \textbf{(b)} Hypotheses  \eqref{f_{2}} and \eqref{f_{3}} describe the behaviour of the function
  $\psi_{\alpha}\colon (0, s_{\alpha})\to (0, \gamma_{\alpha})$
  defined by
\begin{equation}\label{eq_psi}
  \psi_{\alpha}(s)=\frac{f(s,\alpha)}{s},\ \forall s\in (0, s_{\alpha}).
\end{equation}
  Hypothesis \eqref{f_{2}} guarantees its monotonicity, which will be crucial
  in the proof of the uniqueness of solutions for the auxiliary problem \eqref{AP}
  (see Proposition \ref{pro1}) and for other technical issues (see Lemmas \ref{odd}, \ref{odt}, and \ref{behhhh}).
  Hypothesis \eqref{f_{3}} says that the limits of the functions $\psi_{\alpha}$ at zero are separated from $\lambda_1$ (uniformly in $\alpha$).
\end{remark}

  Moreover we assume that:
  \medskip

\begin{enumerate}[label=($f_{\arabic*}$),ref=$f_{\arabic*}$,start=4]
 \item\label{f_{4}} $t_{K}<\big(\inf\limits_{\alpha\in\mathcal{A}}s_{\alpha}\big)^{p}\displaystyle\int_{\Omega}e_{1}^{p}dx$;
 \item\label{f_{5}} $\inf\limits_{\alpha\in (t_{k-1}, t_{k})}\max\limits_{s\in[0, s_{\alpha}]}f(s,\alpha)(s_{\alpha}^{p-1}/\alpha)<\lambda_{1}^{1/2}/C_{1}|\Omega|^{1/2}$,
             for all $k\in\{1, \ldots, K\}$.
\end{enumerate}

\medskip

%
%

\begin{remark}\label{rem_hyp45}\rm
  \textbf{(a)} Hypotheses \eqref{f_{4}} 
  gives some bounds on the ``edges'' of the
  curvilinear trapezoid $\{s_{\alpha}:\ \alpha\in\ \mathcal{A}\}\times \mathcal{A}$,
  in which the interesting domain of $f$ is contained.\\
  \textbf{(b)} Hypotheses \eqref{f_{5}} 
  is quite technical and  says that
  for some particular choice of $\alpha_0\in(t_{k-1}, t_{k})$ the maximum of the quotient $f(s,\alpha_0)/\alpha_0$
  over $s\in[0,s_{\alpha}]$ is bounded by some constant.
\end{remark}

  Our main result is the following.

\begin{theorem}\label{main1}
  If conditions \eqref{f_{0}}-\eqref{f_{5}} hold, then problem \eqref{P1} has at least $2K$ classical positive solutions with ordered $L^{p}$-norms, namely
$$
  0<\int_{\Omega}u_{1, 1}^{p}dx<\int_{\Omega}u_{1, 2}^{p}dx<t_{1}<\ldots<t_{K-1}<\int_{\Omega}u_{K, 1}^{p}dx<\int_{\Omega}u_{K, 2}^{p}dx<t_{K}.
$$
\end{theorem}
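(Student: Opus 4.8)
The plan is to decouple the nonlocal dependence by \emph{freezing} the variable $\int_\Omega|u|^{p}\,dx$. For each admissible parameter $\alpha\in\mathcal A$ one replaces that integral by $\alpha$ and considers the auxiliary problem \eqref{AP}; by Proposition \ref{pro1} it has a unique positive classical solution $u_\alpha$. Setting
$$
\theta(\alpha):=\int_\Omega u_\alpha^{\,p}\,dx,\qquad \alpha\in\mathcal A,
$$
one checks that $u$ solves \eqref{P1} if and only if $u=u_\alpha$ for some $\alpha$ with $\theta(\alpha)=\alpha$. Thus it suffices to produce, in each interval $(t_{k-1},t_{k})$, two values of $\alpha$ at which $\theta(\alpha)=\alpha$. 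Uniqueness in Proposition \ref{pro1} is what makes $\alpha\mapsto u_\alpha$ a genuine single-valued map, and Lemmas \ref{odd}, \ref{odt}, \ref{behhhh} supply its continuity on each $(t_{k-1},t_{k})$ together with the asymptotic information used below.

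First I would record the a priori bounds. Since $f(\cdot,\alpha)>0$ on $(0,s_\alpha)$ and $f(s_\alpha,\alpha)=0$, the constant $s_\alpha$ is a supersolution, so $0<u_\alpha\le s_\alpha$ pointwise and, as $p\ge1$, $\theta(\alpha)\le s_\alpha^{\,p-1}\!\int_\Omega u_\alpha\,dx$. Testing \eqref{AP} with $u_\alpha$, bounding $f(s,\alpha)\le M_\alpha:=\max_{[0,s_\alpha]}f(\cdot,\alpha)$ and using the embedding $|u|_1\le C_1\|u\|$ gives $\|u_\alpha\|\lesssim M_\alpha$, whence
$$
\theta(\alpha)\ \le\ C_1^{2}\,M_\alpha\,s_\alpha^{\,p-1}.
$$
Choosing, for a fixed $k$, a parameter $\alpha_0\in(t_{k-1},t_{k})$ (almost) realising the infimum in \eqref{f_{5}}, hypothesis \eqref{f_{5}} is calibrated exactly so that this estimate yields $\theta(\alpha_0)<\alpha_0$, i.e. one interior point where $\theta(\alpha)-\alpha<0$.

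Next I would analyse $\theta$ at the singular endpoints of $(t_{k-1},t_{k})$. For $\alpha$ near a singularity $t_k$, hypotheses \eqref{f_{1}} and \eqref{f_{2}} force $f(s,\alpha)/s\ge\lambda_1$ on an interval $(0,s_\alpha^{*})$ with $s_\alpha^{*}/s_\alpha\to1$; hence $c\,e_1$ is a subsolution of \eqref{AP} for every $c\le s_\alpha^{*}$, and comparison gives $u_\alpha\ge s_\alpha^{*}e_1$. Letting $\alpha\to t_k^{-}$ and using \eqref{f_{4}} yields
$$
\liminf_{\alpha\to t_k^{-}}\theta(\alpha)\ \ge\ \Big(\inf_{\beta\in\mathcal A}s_\beta\Big)^{p}\!\int_\Omega e_1^{p}\,dx\ >\ t_K\ \ge\ t_k,
$$
so $\theta(\alpha)>\alpha$ for $\alpha$ near $t_k^{-}$; the same subsolution argument at $t_{k-1}^{+}$ (and, when $k=1$, the uniform positivity of $\theta$ coming from \eqref{f_{3}}) gives $\theta(\alpha)>\alpha$ near the left endpoint as well. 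Therefore on each $(t_{k-1},t_{k})$ the continuous function $\theta(\alpha)-\alpha$ is positive near both endpoints and negative at $\alpha_0$; the intermediate value theorem produces two zeros $\alpha_{k,1}<\alpha_0<\alpha_{k,2}$, hence two solutions $u_{k,1},u_{k,2}$ with $\int_\Omega u_{k,i}^{p}\,dx=\alpha_{k,i}\in(t_{k-1},t_{k})$. Ranging over $k=1,\dots,K$ gives the $2K$ solutions with the ordered $L^{p}$-norms claimed, and elliptic regularity makes each one classical.

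The main obstacle is the behaviour at the singular values $t_k$: one must show that $\theta$ is continuous on $(t_{k-1},t_k)$ and that it stays large as $\alpha\to t_k^{\mp}$ despite $f$ losing continuity there. This is precisely where uniqueness and \eqref{f_{2}} are essential—uniqueness (Proposition \ref{pro1}) upgrades continuous dependence of $u_\alpha$ on $\alpha$ into continuity of $\theta$, while the monotonicity \eqref{f_{2}} both underlies that uniqueness and forces the subsolution $s_\alpha^{*}e_1$ to fill almost all of $(0,s_\alpha)$ near the singularity. The delicate point is to make the endpoint estimate and the interior estimate compatible, so that $\theta>\alpha$ near the edges and $\theta<\alpha$ at $\alpha_0$ hold \emph{simultaneously}; this compatibility is exactly the content of the quantitative calibrations \eqref{f_{4}} and \eqref{f_{5}}.
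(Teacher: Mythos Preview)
Your proposal follows the paper's proof essentially step for step: freeze $\alpha$, use Proposition~\ref{pro1} to define the single-valued map $\theta=\mathcal P_k$, establish its continuity on each $(t_{k-1},t_k)$, show $\theta(\alpha)>\alpha$ near both endpoints via the subsolution $\psi_\alpha^{-1}(\lambda_1)e_1$ (your $s_\alpha^{*}e_1$) together with \eqref{f_{4}}, produce an interior point $\alpha_0$ with $\theta(\alpha_0)<\alpha_0$ via \eqref{f_{5}}, and conclude by the intermediate value theorem. The only notable departure is in the interior estimate: you bound $\theta(\alpha)\le s_\alpha^{p-1}|u_\alpha|_1\le C_1^{2}M_\alpha s_\alpha^{p-1}$ by testing \eqref{AP} directly with $u_\alpha$, whereas the paper introduces an auxiliary $w_\alpha$ solving $-\Delta w=u_\alpha^{p-1}$ to obtain $\mathcal P_k(\alpha)\le (C_1/\sqrt{\lambda_1})|\Omega|^{1/2}M_\alpha s_\alpha^{p-1}$; since $C_1\le|\Omega|^{1/2}/\sqrt{\lambda_1}$ your bound is at least as sharp and is equally compatible with \eqref{f_{5}}, so this is a mild simplification rather than a different route (minor caveats: continuity is Proposition~\ref{pp2}, not Lemmas~\ref{odt}--\ref{behhhh}, and what Lemma~\ref{behhhh} actually gives is $\liminf \psi_\alpha^{-1}(\lambda_1)\ge\inf_\beta s_\beta$, not $s_\alpha^{*}/s_\alpha\to1$, but that is all you need).
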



  The proof of the above theorem will follow the following general steps:
  \begin{itemize}
  \item[Step 1.] We first introduce an auxiliary problem with a truncated nonlinearity and fixed nonlocal term;
  \item[Step 2.] By variational methods we show that the auxiliary problem has a unique solution;
  \item[Step 3.] We define a suitable map which gives the $L^{p}$-norm
  of the solution;
  \item[Step 4.] Finally, we show that this map has fixed points, which are indeed solutions of the considered problem.
  \end{itemize}

  The structure of the paper is the following.
  In Section  \ref{sec:Auxiliary} we develop Step 1 and Step 2 above.
  In Section \ref{sec:Th1}  we address Step 3 and Step 4 above and implement them to the proof of Theorem \ref{main1}.
  In the final Section \ref{sec:Final} we give examples of nonlinearities $f$ satisfying our assumptions.

\section{Auxiliary problem}\label{sec:Auxiliary}

\subsection{Statement of the auxiliary problem \eqref{AP}}
  In order to prove our results, the following auxiliary problem will play an important role: for each $k\in\{1, \ldots, K\}$ and any $\alpha\in (t_{k-1}, t_{k})$ fixed, consider

\begin{equation}\label{AP}\tag{$P_{k, \alpha}$}
  \left \{ \begin{array}{ll}
    -\Delta u = \widehat{f}_{\alpha}(u) & \mbox{in $\Omega$,}\\
    u>0 & \mbox{in $\Omega$,}\\
    u=0 & \mbox{on $\partial\Omega$,}
  \end{array}\right.
\end{equation}
  where
\begin{equation*}
  \widehat{f}_{\alpha}(s)=\left \{ \begin{array}{ll}
  f(0, \alpha) & \mbox{if $s\leq 0$,}\\
  f(s, \alpha) & \mbox{if $0<s\leq s_{\alpha}$,}\\
  0 & \mbox{if $s_{\alpha}<s$.}\\
\end{array}\right.
\end{equation*}

\begin{remark}\label{rem1}\rm
  Since $f$ is continuous, it is an immediate consequence of the definition of $\widehat{f}_{\alpha}$
  that if $\{\alpha_{n}\}\subset (t_{k-1}, t_{k})$ converges to $\alpha\in (t_{k-1}, t_{k})$
  and $s_{n}\in (0, s_{\alpha_{n}})$ is such that $s_{n}\to s$, for some $s\in (0, s_{\alpha})$,
  then $\widehat{f}_{\alpha_{n}}(s_{n})\to\widehat{f}_{\alpha}(s)$.
\end{remark}

\subsection{Existence of the solutions of \eqref{AP}}

  Since we have ``removed'' the nonlocal term of \eqref{P1}, we can treat problem \eqref{AP} using a variational approach.

\begin{proposition}\label{pro1}
  If conditions \eqref{f_{0}}, \eqref{f_{2}} and \eqref{f_{3}} hold, then for each $k\in\{1, \ldots, K\}$ and $\alpha\in (t_{k-1}, t_{k})$ fixed, problem \eqref{AP} has a
  unique classical solution $0<u_{\alpha}\leq s_{\alpha}$.
\end{proposition}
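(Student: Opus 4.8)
The plan is to treat \eqref{AP} variationally, exploiting that the truncated nonlinearity $\widehat f_\alpha$ is bounded. First I would introduce the energy functional
\[
  J_\alpha(u)=\frac12\int_\Omega|\nabla u|^2\,dx-\int_\Omega \widehat F_\alpha(u)\,dx,\qquad \widehat F_\alpha(s)=\int_0^s\widehat f_\alpha(\tau)\,d\tau,
\]
on $H_0^1(\Omega)$. Since $f(\cdot,\alpha)$ is continuous on the compact interval $[0,s_\alpha]$ and $\widehat f_\alpha$ is constant outside it, $\widehat f_\alpha$ is bounded, so $\widehat F_\alpha$ grows at most linearly and $J_\alpha$ is coercive; it is weakly lower semicontinuous because the gradient term is convex while the nonlinear term is weakly continuous via the compact embedding $H_0^1(\Omega)\hookrightarrow L^2(\Omega)$. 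Hence $J_\alpha$ attains a global minimum at some $u_\alpha\in H_0^1(\Omega)$, a weak solution of $-\Delta u=\widehat f_\alpha(u)$. As $\widehat f_\alpha(u_\alpha)\in L^\infty(\Omega)$, elliptic $L^q$-regularity gives $u_\alpha\in C^{1,\gamma}(\overline\Omega)$, and since $\widehat f_\alpha$ is Lipschitz (because $f(\cdot,\alpha)\in C^1$), the right-hand side is Hölder continuous and Schauder theory upgrades $u_\alpha$ to a classical solution.

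Next I would establish the bounds $0<u_\alpha\le s_\alpha$. For the upper bound, I test the weak formulation with $(u_\alpha-s_\alpha)^+$: on $\{u_\alpha>s_\alpha\}$ one has $\widehat f_\alpha(u_\alpha)=0$, so $\|(u_\alpha-s_\alpha)^+\|^2=0$ and $u_\alpha\le s_\alpha$. For nonnegativity, a direct computation shows $J_\alpha(u_\alpha^+)\le J_\alpha(u_\alpha)$ (using $\widehat F_\alpha(s)=f(0,\alpha)\,s\le 0$ for $s<0$), with equality forcing $u_\alpha^-=0$; hence we may take $u_\alpha\ge 0$. Nontriviality is where \eqref{f_{3}} enters: since $f(s,\alpha)/s\to\gamma_\alpha>\lambda_1$ as $s\to0^+$, for small $t>0$ (so that $t\,|\varphi_1|_\infty<s_\alpha$) one has $\widehat F_\alpha(t\varphi_1)\ge\frac{\gamma_\alpha-\varepsilon}{2}\,t^2\varphi_1^2$ pointwise, whence, using $\|\varphi_1\|^2=1$ and $|\varphi_1|_2^2=1/\lambda_1$,
\[
  J_\alpha(t\varphi_1)\le\frac{t^2}{2}\Big(1-\frac{\gamma_\alpha-\varepsilon}{\lambda_1}\Big)<0
\]
for $\varepsilon$ small. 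Thus $\min J_\alpha<0=J_\alpha(0)$, so $u_\alpha\not\equiv0$; as $-\Delta u_\alpha=\widehat f_\alpha(u_\alpha)\ge0$ with $u_\alpha\ge0$, $u_\alpha\not\equiv0$, the strong maximum principle gives $u_\alpha>0$ in $\Omega$.

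For uniqueness I would invoke the monotonicity \eqref{f_{2}} through a Picone / Brezis–Oswald argument. Any positive solution satisfies $0<u\le s_\alpha$ by the test above, so on its range $\widehat f_\alpha(s)/s=f(s,\alpha)/s$ is strictly decreasing. Given two positive solutions $u_1,u_2$, testing the equations with $(u_1^2-u_2^2)/u_1$ and $(u_1^2-u_2^2)/u_2$ and summing reduces the left-hand side to $\int_\Omega\big(|\nabla u_1-\tfrac{u_1}{u_2}\nabla u_2|^2+|\nabla u_2-\tfrac{u_2}{u_1}\nabla u_1|^2\big)\,dx\ge0$, yielding
\[
  0\le\int_\Omega\Big(\frac{\widehat f_\alpha(u_1)}{u_1}-\frac{\widehat f_\alpha(u_2)}{u_2}\Big)(u_1^2-u_2^2)\,dx.
\]
By \eqref{f_{2}} the integrand is pointwise $\le0$, so it vanishes almost everywhere; strict monotonicity then forces $u_1=u_2$.

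The main obstacle I expect is the uniqueness step: the test functions $(u_1^2-u_2^2)/u_i$ involve quotients $u_i/u_j$ that must be shown to belong to $H_0^1(\Omega)$, which is delicate since both $u_1,u_2$ vanish on $\partial\Omega$. I would resolve this via the Hopf lemma, which guarantees that each positive solution behaves like the distance to $\partial\Omega$ near the boundary, so the quotients stay bounded and the test functions are admissible; alternatively one can work with $u_i+\delta$ and pass to the limit $\delta\to0^+$. A secondary point to verify is the regularity of $\widehat f_\alpha$ at the junctions $s=0$ and $s=s_\alpha$, where it is merely Lipschitz, but this is enough for the $C^{1,\gamma}$–Schauder bootstrap above.
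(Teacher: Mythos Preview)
Your proposal is correct and follows essentially the same approach as the paper: minimize the coercive, weakly lower semicontinuous energy associated to the bounded truncation $\widehat f_\alpha$; use \eqref{f_{3}} with the test direction $t\varphi_1$ to get a negative energy level and hence nontriviality; obtain the bounds $0\le u_\alpha\le s_\alpha$ by testing with $(u_\alpha-s_\alpha)^+$ (the paper gets $u_\alpha\ge 0$ by testing with $u_\alpha^-$ rather than comparing $J_\alpha(u_\alpha^+)$ with $J_\alpha(u_\alpha)$, but both rely on $f(0,\alpha)\ge 0$); upgrade to a classical solution via elliptic regularity and conclude strict positivity by the maximum principle; and deduce uniqueness from the monotonicity \eqref{f_{2}} through Brezis--Oswald, which the paper simply cites while you spell out the Picone identity and address the admissibility of the test functions via Hopf's lemma.
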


\begin{proof}
  Since, for each $k\in\{1, \ldots, K\}$ and $\alpha\in (t_{k-1}, t_{k})$ fixed, $\widehat{f}_{\alpha}$ is bounded and continuous, the energy functional
$$
  I_{k, \alpha}(u)=\frac{1}{2}\|u\|^{2}-\int_{\Omega}\widehat{F}_{\alpha}(u)dx
$$
  corresponding to problem \eqref{AP} is coercive and weakly lower semicontinuous.
  Here as usual,
$$\widehat{F}_{\alpha}(s)=\int_{0}^{s}\widehat{f}_{\alpha}(\sigma) d\sigma.$$
  Therefore $I_{k, \alpha}$ has a minimum point $u_{\alpha}$
  which is a weak solution of \eqref{AP}.
  Moreover, it follows from conditions \eqref{f_{2}} and \eqref{f_{3}} that
$$
  \frac{I_{k, \alpha}(s\varphi_{1})}{s^{2}}=\frac{1}{2}-\int_{\Omega}\frac{\widehat{F}_{\alpha}(s\varphi_{1})}{(s\varphi_{1})^{2}}\varphi_{1}^{2}dx\to
  \frac{1}{2}\left(1-\frac{\gamma_{\alpha}}{\lambda_{1}}\right)<0 \ \mbox{as $s\to 0^{+}$}.
$$
  The last inequality implies that the minimum point $u_{\alpha}$ of $I_{k, \alpha}$ is nontrivial, because for $s>0$ small enough
$$
  I_{k, \alpha}(u_{\alpha})\leq I_{k, \alpha}(s\varphi_{1})=(I_{k, \alpha}(s\varphi_{1})/s^{2})s^{2}<0.
$$
  Then $u_{\alpha}\in H^{1}_{0}(\Omega)$ satisfies
\begin{equation}\label{eq:weaksolution}
  \int_{\Omega} \nabla u_{\alpha} \nabla v dx= \int_{\Omega} \widehat f_{\alpha}(u_{\alpha}) v dx \quad
  \forall\ v\in H^{1}_{0}(\Omega).
\end{equation}
  Then  it is easy to see that any nontrivial weak solution $u_{\alpha}$ of problem \eqref{AP}
  satisfies $0\leq u_{\alpha}\leq s_{\alpha}$. Indeed by choosing $v=(u_{\alpha}-s_{\alpha})^{+}$ in \eqref{eq:weaksolution} we arrive
  (by the definition of $\widehat f_{\alpha}$) at
$$
  \int_{\Omega}\nabla u_{\alpha}\nabla (u_{\alpha}-s_{\alpha})^{+} dx=\int_{\Omega} \widehat{f}_{\alpha}(u_{\alpha}) (u_{\alpha}-s_{\alpha})^{+} dx =0
$$
  which redly  implies $u_{\alpha}\leq s_{\alpha}$. Analogously one deduces that $0\leq u_{\alpha}$ by taking $v=u_{\alpha}^{-}$ in  \eqref{eq:weaksolution}.
  Morover
  the weak solution of problem \eqref{AP} is unique by condition \eqref{f_{2}},
  see Brezis-Oswald \cite{BO}.
  Since $\widehat{f}_{\alpha}(u_{\alpha})=f(u_{\alpha},\alpha)$ is bounded and $f(\cdot,\alpha)\in C^{1}(\R)$, it follows
  from Agmon \cite{Ag} that $u_{\alpha}$ is a classical solution. Finally, the maximum principle completes the proof
  (see  Gilbarg-Trudinger \cite[Theorem 3.1]{GT}).
\end{proof}

\medskip

\subsection{Properties of the solutions of \eqref{AP}}

  Due to Proposition \ref{pro1}, we can set
\begin{equation}\label{eq:ca}
  c_{\alpha}:=I_{k,\alpha}(u_{\alpha})=\min_{u\in H_{0}^{1}(\Omega)}I_{k, \alpha}(u).
\end{equation}

  Since, by condition \eqref{f_{2}}, the map $(0, s_{\alpha})\ni s\mapsto\psi_{\alpha}(s)=\widehat{f}_{\alpha}(s)/s$ is decreasing
  (see \eqref{eq_psi} in Remark \ref{rem_hyp123}),
  there exists the inverse $\psi_{\alpha}^{-1}\colon (0, \gamma_{\alpha})\to (0, s_{\alpha})$.
  Thereby, by condition \eqref{f_{3}}, for
  each $\varepsilon\in (0, \inf\limits_{\alpha\in\mathcal{A}}\gamma_{\alpha}-\lambda_{1})$, it makes sense to consider the function
$$
  y_{\alpha}:=\psi_{\alpha}^{-1}(\lambda_{1}+\varepsilon)e_{1}.
$$

\begin{lemma}\label{odd}
  If conditions \eqref{f_{0}}, \eqref{f_{2}} and \eqref{f_{3}} hold, $\alpha\in (t_{k-1}, t_{k})$, then for each
  $\varepsilon\in (0, \inf\limits_{\alpha\in\mathcal{A}}\gamma_{\alpha}-\lambda_{1})$, we have
\begin{equation}\label{7}
  c_{\alpha}\leq -\frac{1}{2}\varepsilon\psi_{\alpha}^{-1}(\lambda_{1}+\varepsilon)^{2}\int_{\Omega}e_{1}^{2} dx.
\end{equation}
  where $c_{\alpha}$ is given in \eqref{eq:ca}.
\end{lemma}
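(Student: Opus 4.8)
The plan is to use the variational characterization $c_\alpha=\min_{u\in H^1_0(\Omega)}I_{k,\alpha}(u)$ recorded in \eqref{eq:ca} and to bound $c_\alpha$ from above by testing the functional on the explicit competitor $y_\alpha=\psi_\alpha^{-1}(\lambda_1+\varepsilon)\,e_1$ defined just before the statement. Set $s^{*}:=\psi_\alpha^{-1}(\lambda_1+\varepsilon)\in(0,s_\alpha)$. The first thing I would check is that $y_\alpha$ is an \emph{admissible} competitor lying in the ``good'' regime: since $e_1$ is normalized in $L^{\infty}(\Omega)$ we have $0\le e_1\le 1$, hence $0\le y_\alpha\le s^{*}<s_\alpha$ pointwise, so $y_\alpha$ takes values in $[0,s_\alpha)$ where $\widehat f_\alpha(s)=f(s,\alpha)$ and where the monotonicity of $\psi_\alpha$ granted by \eqref{f_{2}} is in force.

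First I would handle the Dirichlet part. As $e_1$ solves $-\Delta e_1=\lambda_1 e_1$ with zero boundary values, integration by parts gives $\|e_1\|^2=\lambda_1\int_\Omega e_1^2\,dx$, whence $\|y_\alpha\|^2=(s^{*})^2\lambda_1\int_\Omega e_1^2\,dx$. Next I would estimate the potential term from below. Writing $\widehat F_\alpha(s)=\int_0^s\psi_\alpha(\sigma)\sigma\,d\sigma$ (see \eqref{eq_psi}) and using that \eqref{f_{2}} makes $\psi_\alpha$ decreasing, for $0<\sigma\le s^{*}$ one has $\psi_\alpha(\sigma)\ge\psi_\alpha(s^{*})=\lambda_1+\varepsilon$; integrating this inequality yields the pointwise bound $\widehat F_\alpha(y_\alpha(x))\ge\frac12(\lambda_1+\varepsilon)\,y_\alpha(x)^2$, so that $\int_\Omega\widehat F_\alpha(y_\alpha)\,dx\ge\frac12(\lambda_1+\varepsilon)(s^{*})^2\int_\Omega e_1^2\,dx$.

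Combining the two estimates in $I_{k,\alpha}(y_\alpha)=\frac12\|y_\alpha\|^2-\int_\Omega\widehat F_\alpha(y_\alpha)\,dx$, the common factor $\frac12(s^{*})^2\int_\Omega e_1^2\,dx$ acquires the coefficient $\lambda_1-(\lambda_1+\varepsilon)=-\varepsilon$, so that $c_\alpha\le I_{k,\alpha}(y_\alpha)\le-\frac12\varepsilon(s^{*})^2\int_\Omega e_1^2\,dx$, which is exactly \eqref{7} after substituting $s^{*}=\psi_\alpha^{-1}(\lambda_1+\varepsilon)$. The arithmetic is routine; the single point I would treat carefully — and what I regard as the crux — is the admissibility check in the first paragraph, since it is precisely the $L^{\infty}$-normalization of $e_1$ that keeps $y_\alpha$ inside $(0,s_\alpha)$ and thereby allows \eqref{f_{2}} to deliver the clean lower bound on $\widehat F_\alpha$ used in the estimate.
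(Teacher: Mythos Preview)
Your argument is correct and follows essentially the same route as the paper: test the functional at $y_\alpha=\psi_\alpha^{-1}(\lambda_1+\varepsilon)e_1$, use $\|e_1\|^2=\lambda_1\int_\Omega e_1^2\,dx$, and exploit the monotonicity of $\psi_\alpha$ from \eqref{f_{2}} together with $0\le y_\alpha\le\psi_\alpha^{-1}(\lambda_1+\varepsilon)$ to bound $\widehat F_\alpha(y_\alpha)$ from below. The only cosmetic difference is that the paper first records the intermediate inequality $\widehat F_\alpha(s)\ge\tfrac12\widehat f_\alpha(s)s$ and then uses $\psi_\alpha(y_\alpha)\ge\lambda_1+\varepsilon$, whereas you integrate the pointwise bound $\psi_\alpha(\sigma)\ge\lambda_1+\varepsilon$ directly; both lead to the same estimate.
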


\begin{proof}
  Note that, by condition \eqref{f_{2}}, we have
$$
  \widehat{F}_{\alpha}(t)\geq \frac12\widehat{f}_{\alpha}(s)s\quad \forall \ s\geq 0.
$$
  Hence,
$$
  \frac{I_{k,\alpha}(y_{\alpha})}{\psi_{\alpha}^{-1}(\lambda_{1}+\varepsilon)^{2}}
  \leq\frac{1}{2}\left[\|e_{1}\|^{2}-\int_{\Omega}\frac{\widehat{f}_{\alpha}(y_{\alpha})}{\psi_{\alpha}^{-1}(\lambda_{1}+\varepsilon)^{2}}y_{\alpha}dx\right],
$$
  or equivalently
$$
  \frac{I_{k, \alpha}(y_{\alpha})}{\psi_{\alpha}^{-1}(\lambda_{1}+\varepsilon)^{2}}
  \leq\frac{1}{2}\left[\|e_{1}\|^{2}-\int_{\Omega}\frac{\widehat{f}_{\alpha}(y_{\alpha})}{y_{\alpha}}e_{1}^{2}dx\right].
$$
  Using the definition of $e_{1}$ and condition \eqref{f_{2}}, we get
$$
  \frac{I_{k,\alpha}(y_{\alpha})}{\psi_{\alpha}^{-1}(\lambda_{1}+\varepsilon)^{2}}
  \leq\frac{1}{2}\left[\|e_{1}\|^{2}-\int_{\Omega}\frac{\widehat{f}_{\alpha}(\psi_{\alpha}^{-1}(\lambda_{1}+\varepsilon))}{\psi_{\alpha}^{-1}(\lambda_{1}+\varepsilon)}e_{1}^{2}dx\right].
$$
  Now, using the definition of $\psi_{\alpha}^{-1}$, we deduce
$$
  \frac{I_{k,\alpha}(y_{\alpha})}{\psi_{\alpha}^{-1}(\lambda_{1}+\varepsilon)^{2}}
  \leq\frac{1}{2}\left[\|e_{1}\|^{2}-(\lambda_{1}+\varepsilon)\int_{\Omega}e_{1}^{2}dx\right]=-\frac{1}{2}\varepsilon\int_{\Omega}e_{1}^{2}dx.
$$
  Therefore,
$$
  c_{\alpha}\leq I_{k, \alpha}(y_{\alpha})\leq -\frac{1}{2}\varepsilon\psi_{\alpha}^{-1}(\lambda_{1}+\varepsilon)^{2}\int_{\Omega}e_{1}^{2} dx,
$$
  which concludes the proof.
\end{proof}

  The next two technical results  will be helpful in what follows.

\begin{lemma}\label{odt}
  If conditions \eqref{f_{0}}, \eqref{f_{2}} and \eqref{f_{3}} hold, then
\begin{equation}\label{2}
  u_{\alpha}\geq z_{\alpha}:=\psi_{\alpha}^{-1}(\lambda_{1})e_{1}\quad \forall \ \alpha\in (t_{k-1}, t_{k}).
\end{equation}
\end{lemma}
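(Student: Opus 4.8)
The plan is to show that $z_\alpha$ is a weak subsolution of \eqref{AP} lying below a natural supersolution, and then to identify the solution produced by the sub-/supersolution method with $u_\alpha$ by invoking the uniqueness already established in Proposition \ref{pro1}.

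First I would check that $z_\alpha$ is well defined and controlled pointwise. By \eqref{f_{3}} we have $\lambda_1<\gamma_\alpha$, so $\lambda_1$ belongs to the range of $\psi_\alpha$ and $\psi_\alpha^{-1}(\lambda_1)\in(0,s_\alpha)$ makes sense. Since $e_1$ is normalized in the $L^\infty$-norm, $0<e_1\le 1$ in $\Omega$, whence
\[
0<z_\alpha=\psi_\alpha^{-1}(\lambda_1)\,e_1\le \psi_\alpha^{-1}(\lambda_1)<s_\alpha \quad\text{in }\Omega,
\]
and $z_\alpha=0$ on $\partial\Omega$.

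Next comes the key step, that $z_\alpha$ is a subsolution. Since $-\Delta e_1=\lambda_1 e_1$, we get $-\Delta z_\alpha=\lambda_1 z_\alpha$. Because $0<z_\alpha\le s_\alpha$ we have $\widehat{f}_\alpha(z_\alpha)=f(z_\alpha,\alpha)=\psi_\alpha(z_\alpha)\,z_\alpha$; using that by \eqref{f_{2}} the map $\psi_\alpha$ is decreasing and that $z_\alpha\le \psi_\alpha^{-1}(\lambda_1)$, it follows that $\psi_\alpha(z_\alpha)\ge \psi_\alpha\big(\psi_\alpha^{-1}(\lambda_1)\big)=\lambda_1$, so that
\[
-\Delta z_\alpha=\lambda_1 z_\alpha\le \psi_\alpha(z_\alpha)\,z_\alpha=\widehat{f}_\alpha(z_\alpha).
\]
On the other side, the constant $s_\alpha$ is a supersolution, since $-\Delta s_\alpha=0=f(s_\alpha,\alpha)=\widehat{f}_\alpha(s_\alpha)$, and clearly $z_\alpha\le s_\alpha$.

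Finally, since $\widehat{f}_\alpha$ is continuous and bounded and $f(\cdot,\alpha)\in C^1(\R)$, the ordered pair $z_\alpha\le s_\alpha$ of sub- and supersolution yields, by monotone iteration, a weak solution $w$ of \eqref{AP} with $z_\alpha\le w\le s_\alpha$; in particular $w>0$ in $\Omega$ and $w=0$ on $\partial\Omega$, so $w$ is a positive solution of \eqref{AP}. By the uniqueness part of Proposition \ref{pro1} we then have $w=u_\alpha$, and therefore $u_\alpha\ge z_\alpha$, as claimed. The delicate point is precisely the subsolution estimate, where one must use the decreasing character of $\psi_\alpha$ together with the pointwise bound $z_\alpha\le\psi_\alpha^{-1}(\lambda_1)$ coming from $e_1\le 1$; I expect this to be the main obstacle because a direct weak-comparison argument (testing the difference of the weak formulations with $(z_\alpha-u_\alpha)^+$) is \emph{not} available, as $\widehat{f}_\alpha$ itself need not be monotone and would require a Picone-type device, which the sub-/supersolution plus uniqueness route conveniently circumvents.
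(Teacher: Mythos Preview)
Your argument is correct. The core step---showing that $z_\alpha$ is a subsolution of \eqref{AP} via the monotonicity of $\psi_\alpha$ together with $0<z_\alpha\le \psi_\alpha^{-1}(\lambda_1)$---is exactly what the paper does. The difference lies only in how the comparison $u_\alpha\ge z_\alpha$ is concluded. The paper, once $z_\alpha$ is a subsolution, invokes directly \cite[Lemma~3.3]{ABC}, a Picone-type comparison principle valid precisely because $s\mapsto f(s,\alpha)/s$ is decreasing (hypothesis \eqref{f_{2}}): any subsolution lies below any positive solution. You instead introduce the constant supersolution $s_\alpha$, run the sub-/supersolution scheme to produce a solution $w$ with $z_\alpha\le w\le s_\alpha$, and then identify $w=u_\alpha$ via the uniqueness in Proposition~\ref{pro1}. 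Your route is slightly longer but more self-contained (it avoids quoting the external comparison lemma); the paper's route is shorter but relies on the specific structure of \eqref{f_{2}} through \cite{ABC}. Both are entirely sound, and your closing remark that a naive test with $(z_\alpha-u_\alpha)^+$ would fail without a Picone device is exactly the point behind the paper's citation of \cite{ABC}.
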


\begin{proof}

  From condition \eqref{f_{2}} and definition of $\psi_{\alpha}^{-1}$ it follows that
$$
  \lambda_{1}= \frac{\widehat{f}_{\alpha}(\psi_{\alpha}^{-1}(\lambda_{1}))}{\psi_{\alpha}^{-1}(\lambda_{1})}\leq \frac{\widehat{f}_{\alpha}(z_{\alpha})}{z_{\alpha}}.
$$
  Thus
$$
  -\Delta(z_{\alpha})=\lambda_{1}z_{\alpha}\leq \widehat{f}_{\alpha}(z_{\alpha}) \ \mbox{in $\Omega$}.
$$

  Therefore $z_{\alpha}$ is a subsolution of \eqref{AP}. Inequality \eqref{2} follows now from condition
  \eqref{f_{2}} and
  Ambrosetti-Brezis-Cerami \cite[Lemma 3.3]{ABC}.
\end{proof}

\begin{lemma}\label{behhhh}
  If conditions \eqref{f_{0}}-\eqref{f_{2}} hold, then
\begin{equation}\label{232}
  \liminf\limits_{\alpha\to t_{k-1}^{+}}\psi_{\alpha}^{-1}(\lambda_{1})\geq \inf\limits_{\alpha\in (t_{k-1}, t_{k})}s_{\alpha}.
\end{equation}
  The same holds for $\alpha\to t_{k}^{-}$.
\end{lemma}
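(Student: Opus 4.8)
The plan is to argue by contradiction, exploiting the monotonicity of $\psi_\alpha$ coming from \eqref{f_{2}} together with the behaviour at the endpoints encoded in \eqref{f_{1}}. Fix $k$ and write $\underline{s}:=\inf_{\alpha\in(t_{k-1},t_k)}s_\alpha$; note that $\psi_\alpha^{-1}(\lambda_1)\in(0,s_\alpha)$ is well defined for every $\alpha\in(t_{k-1},t_k)$, because $\lambda_1<\gamma_\alpha$ by \eqref{f_{3}} places $\lambda_1$ in the range $(0,\gamma_\alpha)$ of $\psi_\alpha$. Suppose, contrary to \eqref{232}, that $\liminf_{\alpha\to t_{k-1}^+}\psi_\alpha^{-1}(\lambda_1)<\underline{s}$. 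Then I can fix a number $s_0$ with $\liminf_{\alpha\to t_{k-1}^+}\psi_\alpha^{-1}(\lambda_1)<s_0<\underline{s}$ and extract a sequence $\alpha_n\to t_{k-1}^+$ with $\psi_{\alpha_n}^{-1}(\lambda_1)<s_0$ for all $n$.

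First I would check that $s_0$ is an admissible argument for every $\psi_{\alpha_n}$: since $s_0<\underline{s}\le s_{\alpha_n}$, we have $s_0\in(0,s_{\alpha_n})$, so $\psi_{\alpha_n}(s_0)$ makes sense. Because $\psi_{\alpha_n}$ is strictly decreasing by \eqref{f_{2}} and $s_0>\psi_{\alpha_n}^{-1}(\lambda_1)$, it follows that
\[
  \frac{f(s_0,\alpha_n)}{s_0}=\psi_{\alpha_n}(s_0)<\psi_{\alpha_n}\big(\psi_{\alpha_n}^{-1}(\lambda_1)\big)=\lambda_1,
\]
that is $f(s_0,\alpha_n)<\lambda_1 s_0$ for every $n$. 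Since the limit in \eqref{f_{1}} exists, evaluating it along $\alpha_n$ gives $\lim_{\alpha\to t_{k-1}^+}f(s_0,\alpha)\le\lambda_1 s_0<\infty$. The intended contradiction then comes from \eqref{f_{1}}, which forces $\lim_{\alpha\to t_{k-1}^+}f(s_0,\alpha)\in[\lambda_1,+\infty]$: in the genuinely singular regime this limit equals $+\infty$, directly incompatible with the finite bound $\lambda_1 s_0$ just obtained. Hence the supposition fails and \eqref{232} holds. The estimate at the right endpoint, $\alpha\to t_k^-$, follows by the identical reasoning with $t_k$ in place of $t_{k-1}$ (here one also uses that $s_{\alpha}\ge\underline s$ throughout, so $s_0$ never leaves the domain of $\psi_\alpha$).

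The delicate point—and the step I expect to require the most care—is precisely this passage to the limit and the \emph{quantitative} strength of \eqref{f_{1}}. In the singular case the argument is immediate, but in the borderline situation where $\lim_{\alpha\to t_{k-1}^+}f(s_0,\alpha)$ is finite, hypothesis \eqref{f_{1}} by itself only yields $\ge\lambda_1$, which contradicts $f(s_0,\alpha_n)<\lambda_1 s_0$ merely for $s_0<1$, not for every $s_0<\underline{s}$. To close this gap I would pass to the pointwise limit $g(s):=\lim_{\alpha\to t_{k-1}^+}f(s,\alpha)$, observe that the domains $(0,s_{\alpha})$ exhaust $(0,\infty)$ because $g(s)\ge\lambda_1>0$ forces the zero $s_\alpha$ to escape to $+\infty$, and then combine $g(s)\ge\lambda_1$ with the limiting monotonicity of $s\mapsto g(s)/s$ inherited from \eqref{f_{2}} to show that $g(s)>\lambda_1 s$ on all of $(0,\underline{s})$. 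Establishing this last inequality on the full interval $(0,\underline{s})$, rather than only on $(0,1)$, is the real obstacle, and it is exactly what pins $\psi_\alpha^{-1}(\lambda_1)$ at or above $\underline{s}$ in the limit.
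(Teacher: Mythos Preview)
Your contradiction argument is exactly the one the paper gives: pick $T$ (your $s_0$) strictly between $\liminf_{\alpha}\psi_\alpha^{-1}(\lambda_1)$ and $\inf_\alpha s_\alpha$, use the monotonicity of $\psi_{\alpha_n}$ from \eqref{f_{2}} to get $f(T,\alpha_n)/T<\lambda_1$, and then invoke \eqref{f_{1}}. The paper's proof stops right there---it simply says ``this contradicts hypothesis \eqref{f_{1}}'' and does not separate out the singular case from the borderline case $\lim_{\alpha} f(T,\alpha)=\lambda_1$.

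In other words, the additional difficulty you flag in your last paragraph (that $f(T,\alpha_n)<\lambda_1 T$ together with $\lim_\alpha f(T,\alpha)\ge \lambda_1$ is only an immediate contradiction when $T<1$) is not something the paper resolves; the paper's argument is in fact shorter than yours and tacitly treats \eqref{f_{1}} as furnishing a direct contradiction. So your proposal matches the paper's approach, and your extra care about the borderline regime goes beyond what the paper writes down; you should not expect to find a further mechanism there that closes the gap you identified.
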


\begin{proof}
 Assuming the contrary, there would exist a sequence $\{\alpha_n\}\subseteq (t_{k-1},t_k)$ such that
 $\alpha_n\to t_{k-1}^+$ (or alternatively $\alpha_n\to t_{k}^-$) and
\[
  \psi_{\alpha_n}^{-1}(\lambda_{1})<T<\inf\limits_{\alpha\in (t_{k-1}, t_{k})}s_{\alpha},
\]
  for some $T\in\R$. Since $T\in [0,s_{\alpha_n}]$ for all $n\ge 1$ and
  $\psi_{\alpha_n}$ is decreasing on $[0,s_{\alpha_n}]$, we would have
\[
  \lambda_1>\psi_{\alpha_n}(T)=\frac{f(T,\alpha_n)}{T}.
\]
  But this contradicts hypothesis \eqref{f_{1}}.
  So the proof is done.
\end{proof}

\section{Proof of the main result}\label{sec:Th1}

\subsection{Auxiliary map $\mathcal P_{k}$}

  In virtue of Proposition \ref{pro1}, given $p\geq1$, for any $k\in \{1,\ldots, K\}$ we can define the map
\begin{equation}\label{eq:Pk1}
  \mathcal{P}_{k}: \alpha\in (t_{k-1}, t_{k}) \longmapsto \int_{\Omega}u_{\alpha}^{p}dx\in \R
\end{equation}
  where $u_{\alpha}$ is the unique solution of problem \eqref{AP}.
  The strategy of proving Theorem \ref{main1} will be to show that $\mathcal P_{k}$ is continuous and has
  two fixed points.
  Indeed any fixed point, let us say $\overline\alpha$, of $\mathcal P_{k}$ satisfies by definition
\begin{equation*}
\left \{ \begin{array}{ll}
  -\Delta u_{\overline\alpha} = \widehat{f}\left(u, \displaystyle\int_{\Omega} u^{p}_{ \overline\alpha} \,dx \right) & \mbox{in $\Omega$,}\\
  0<u_{\overline \alpha}\leq s_{\overline\alpha} & \mbox{in $\Omega$,}\\
  u_{\overline \alpha}=0 & \mbox{on $\partial\Omega$,}
\end{array}\right.
\end{equation*}
  and hence $u_{\overline\alpha}$ is a solution of \eqref{P1} with $L^{p}$-norm in $(t_{k-1}, t_{k})$.

\subsection{Continuity of $\mathcal P_{k}$}
\begin{proposition}\label{pp2}
  If conditions \eqref{f_{0}}, \eqref{f_{2}} and \eqref{f_{3}} hold, then for each $k\in \{1, 2, \ldots, K\}$,
  the map $\mathcal{P}_{k}$ defined in \eqref{eq:Pk1}
 is continuous.
\end{proposition}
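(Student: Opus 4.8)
The plan is to prove sequential continuity. Fix $\alpha\in(t_{k-1},t_k)$ and take an arbitrary sequence $\alpha_n\to\alpha$; discarding finitely many terms we may assume all $\alpha_n$ lie in a compact subinterval $[a,b]\subset(t_{k-1},t_k)$. First I would record the uniform bounds coming from \eqref{f_{0}}: since $M:=\sup_{\beta\in[a,b]}s_\beta<\infty$, the solutions $u_{\alpha_n}$ supplied by Proposition \ref{pro1} satisfy $0<u_{\alpha_n}\le s_{\alpha_n}\le M$, and by continuity of $f$ on the compact set $[0,M]\times[a,b]$ the reaction terms $\widehat f_{\alpha_n}(u_{\alpha_n})$ are bounded in $L^\infty(\Omega)$ uniformly in $n$.

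Next I would upgrade this to compactness and pass to the limit. Since $-\Delta u_{\alpha_n}=\widehat f_{\alpha_n}(u_{\alpha_n})$ with uniformly bounded right-hand side, elliptic regularity yields a uniform bound for $\{u_{\alpha_n}\}$ in $W^{2,q}(\Omega)$ for every $q<\infty$; choosing $q>N$ and using the compact embedding $W^{2,q}(\Omega)\hookrightarrow C^1(\overline\Omega)$ together with the Arzel\`a--Ascoli theorem, I obtain a subsequence (not relabelled) with $u_{\alpha_n}\to u$ in $C^1(\overline\Omega)$, hence uniformly and in $H_0^1(\Omega)$, with $0\le u\le s_\alpha$. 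Passing to the limit in the weak formulation \eqref{eq:weaksolution} identifies $u$ as a weak solution of \eqref{AP}: convergence of the gradient term is immediate from $H_0^1$-convergence, while for the reaction term I would combine the uniform $L^\infty$ bound with the pointwise convergence $\widehat f_{\alpha_n}(u_{\alpha_n}(x))\to\widehat f_\alpha(u(x))$ granted by Remark \ref{rem1} (and continuity of $\widehat f_\alpha$ at the exceptional values $u(x)\in\{0,s_\alpha\}$), and then invoke dominated convergence.

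It then remains to show that $u\not\equiv0$, for in that case the uniqueness assertion of Proposition \ref{pro1} forces $u=u_\alpha$; as every convergent subsequence produces the same limit, the whole sequence converges, and uniform convergence gives $\mathcal P_k(\alpha_n)=\int_\Omega u_{\alpha_n}^p\,dx\to\int_\Omega u_\alpha^p\,dx=\mathcal P_k(\alpha)$, which is precisely the claimed continuity. To exclude the trivial limit I would use the subsolution estimate of Lemma \ref{odt}, namely $u_{\alpha_n}\ge\psi_{\alpha_n}^{-1}(\lambda_1)\,e_1$, and pass it to the limit; this yields $u\ge\big(\liminf_n\psi_{\alpha_n}^{-1}(\lambda_1)\big)e_1$, so it suffices to prove $\liminf_n\psi_{\alpha_n}^{-1}(\lambda_1)>0$. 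Writing $a_n=\psi_{\alpha_n}^{-1}(\lambda_1)\in(0,s_{\alpha_n})$, i.e. $f(a_n,\alpha_n)=\lambda_1 a_n$, the $a_n$ are bounded, and along any subsequence with $a_n\to a$ I would argue by contradiction: if $a=0$, then monotonicity \eqref{f_{2}} and the behaviour of $\psi$ near the origin would force $\lambda_1=\lim_n\psi_{\alpha_n}(a_n)=\gamma_\alpha$, contradicting \eqref{f_{3}}; hence $a>0$ and the barrier does not degenerate.

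I expect this last step --- ruling out the trivial limit --- to be the main obstacle, since it is exactly here that the quantitative content of \eqref{f_{3}}, the uniform separation of the numbers $\gamma_\alpha$ from $\lambda_1$, must be exploited to prevent the lower barrier $\psi_{\alpha_n}^{-1}(\lambda_1)e_1$ from collapsing as $\alpha_n\to\alpha$; some care with the joint behaviour of $(s,\beta)\mapsto f(s,\beta)/s$ as $s\to0^+$ is needed to make the contradiction rigorous. The remainder is the standard regularity-plus-uniqueness continuation scheme.
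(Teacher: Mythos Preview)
Your proposal is correct and follows the same overall scheme as the paper --- compactness, pass to the limit in the weak formulation, rule out the trivial limit, and conclude by uniqueness --- but the implementation of two steps differs. For compactness, the paper first bounds $\{u_n\}$ in $H_0^1$ via the energy inequality $I_{k,\alpha_n}(u_n)<0$, extracts a weak limit, upgrades to strong $H_0^1$ convergence by showing $\|u_n\|\to\|u_*\|$, and only then invokes $C^{1,\beta}$ regularity; you instead go straight from the uniform $L^\infty$ bound on the right-hand side to $W^{2,q}$ estimates and $C^1$ compactness, which is shorter and bypasses the energy functional entirely. For ruling out $u\equiv 0$, the paper uses Lemma~\ref{odd} (the energy estimate $I_{k,\alpha_n}(u_n)\le -\tfrac12\varepsilon\,\psi_{\alpha_n}^{-1}(\lambda_1+\varepsilon)^2\int_\Omega e_1^2$ survives in the limit, so $I_{k,\alpha_*}(u_*)<0$), whereas you use Lemma~\ref{odt} (the pointwise barrier $u_{\alpha_n}\ge\psi_{\alpha_n}^{-1}(\lambda_1)e_1$) together with $\liminf_n\psi_{\alpha_n}^{-1}(\lambda_1)>0$. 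Your contradiction sketch for the latter is essentially right: made precise, if $a_n\to 0$ then for any fixed small $s>0$ eventually $a_n<s$, so monotonicity \eqref{f_{2}} gives $\lambda_1=\psi_{\alpha_n}(a_n)\ge\psi_{\alpha_n}(s)=f(s,\alpha_n)/s\to f(s,\alpha)/s$, hence $\lambda_1\ge\gamma_\alpha$, contradicting \eqref{f_{3}}. Either lemma suffices; the paper's choice dovetails with the variational setup it has already built, while yours fits your regularity-first route.
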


\begin{proof}
  Let $\{\alpha_{n}\}\subset (t_{k-1}, t_{k})$ be a sequence such that $\alpha_{n}\to \alpha_{\ast}$,
  for some $\alpha_{\ast}\in (t_{k-1}, t_{k})$. Denote by $u_{n}$ the positive solution of $\eqref{AP}$ with $\alpha=\alpha_{n}$. Since,
\begin{equation}\label{1}
  \frac{1}{2}\|u_{n}\|^{2}-\int_{\Omega}F_{\alpha}(u_{n})dx=I_{k, \alpha}(u_{n})<0,
\end{equation}
  we get
$$
\|u_{n}\|\leq \left[2F_{\alpha}(s_{\alpha})|\Omega|\right]^{1/2},\quad \forall \ n\in\N.
$$
  Therefore, $\{u_{n}\}$ is bounded in $H_{0}^{1}(\Omega)$ and, up to a subsequence, there exists $u_{\ast}\in H_{0}^{1}(\Omega)$ such that
\begin{equation}\label{19}
  u_{n}\rightharpoonup u_{\ast} \ \mbox{in} \ H_{0}^{1}(\Omega).
\end{equation}
  Thus, from Remark \ref{rem1}, passing to the limit as $n\to\infty$ in
$$
  \int_{\Omega}\nabla u_{n}\nabla v dx=\int_{\Omega} \widehat{f}_{\alpha_{n}}(u_{n})v dx\quad \forall \ v\in H_{0}^{1}(\Omega),
$$
we get
$$
  \int_{\Omega}\nabla u_{\ast}\nabla v dx=\int_{\Omega} \widehat{f}_{\alpha_{\ast}}(u_{\ast})v dx\quad \forall \ v\in H_{0}^{1}(\Omega).
$$
  So, $u_{\ast}$ is a nonnegative weak solution of \eqref{AP} with $\alpha=\alpha_{\ast}$.
  We are going to show that $u_{\ast}\neq 0$. In fact, passing to the limit as
  $n\to\infty$ in
$$
  \int_{\Omega}\nabla u_{n}\nabla u_{\ast} dx=\int_{\Omega} \widehat{f}_{\alpha_{n}}(u_{n})u_{\ast} dx
$$
  and
$$
  \|u_{n}\|^{2}=\int_{\Omega} \widehat{f}_{\alpha_{n}}(u_{n})u_{n} dx,
$$
  we conclude that
\begin{equation}\label{20}
  \|u_{n}\|\to \|u_{\ast}\|.
\end{equation}
  From \eqref{19} and \eqref{20}, we have
\begin{equation}\label{21}
  u_{n}\to u_{\ast} \ \mbox{in $H_{0}^{1}(\Omega)$}.
\end{equation}

  By Lemma \ref{odd}, there exists $\varepsilon>0$, small enough, such that
$$
  I_{k, \alpha_{n}}(u_{n})\leq -\frac{1}{2}\varepsilon\psi_{\alpha}^{-1}(\lambda_{1}+\varepsilon)^{2}\int_{\Omega}e_{1}^{2} dx\quad \forall \ n\in\N.
$$
  So, passing to the limit as $n\to\infty$ and using \eqref{21}, we obtain
$$
  I_{k, \alpha_{\ast}}(u_{\ast})\leq -\frac{1}{2}\varepsilon\psi_{\alpha}^{-1}(\lambda_{1}+\varepsilon)^{2}\int_{\Omega}e_{1}^{2} dx<0.
$$
  Therefore $u_{\ast}\neq 0$. Arguing as in the proof of Proposition \ref{pro1} we can show that $u_{\ast}$ is a positive classical solution of \eqref{AP} with
  $\alpha=\alpha_{\ast}$. Since such a solution is unique, we conclude that
\begin{equation}\label{eq:continuity}
  u_{\ast}=u_{\alpha_{\ast}}.
\end{equation}
  Consequently,
\begin{equation}\label{21b}
  -\Delta(u_{n}-u_{\ast})=\widehat{f}_{\alpha_{n}}(u_{n})-\widehat{f}_{\alpha_{\ast}}(u_{\ast})=:g_{n}(x)\quad \forall \ n\in\N.
\end{equation}
  From \eqref{f_{0}}, the continuity of $f$ and inequalities $0\leq u_{n}\leq s_{\alpha_{n}}, 0\leq u_{\ast}\leq s_{\alpha_{\ast}}$, there exists a  constant $C>0$, such that
\begin{equation}\label{22}
  |g_{n}|_{\infty}\leq C_2\quad \forall \ n\in\N.
\end{equation}
  It follows from \eqref{21b}, \eqref{22} and
  Theorem 0.5 of Ambrosetti-Prodi \cite{AmPr} that there exists $\beta\in (0, 1)$ such that
$$
  \|u_{n}-u_{\ast}\|_{C^{1,\beta}(\overline{\Omega})}\leq C'\quad \forall \ n\in\N,
$$
  for some $C'>0$. By the compactness of embedding from $C^{1,\beta}(\overline{\Omega})$ into $C^{1}(\overline{\Omega})$ and \eqref{21}, up to a
  subsequence, we have
\begin{equation}\label{23}
  u_{n}\to u_{\ast} \ \mbox{in $C^{1}(\overline{\Omega})$}.
\end{equation}
  Convergence in \eqref{23} and inequality
$$
  ||u_{n}|_{p}-|u_{\ast}|_{p}|\leq |u_{n}-u_{\ast}|_{p}\leq |\Omega|^{1/p}|u_{n}-u_{\ast}|_{\infty}
$$
  lead us to
$$
  \mathcal{P}_{k}(\alpha_{n})\to\mathcal{P}_{k}(\alpha_{\ast}).
$$
  This proves the continuity of $\mathcal{P}_{k}$.
\end{proof}

\subsection{Existence of fixed points of $\mathcal{P}_{k}$}

\begin{proposition}\label{pp3}
  If conditions \eqref{f_{0}}-\eqref{f_{5}} hold, then the map $\mathcal{P}_{k}$ defined by \eqref{eq:Pk1}
  has at least two fixed points $t_{k-1}<\alpha_{1, k}<\alpha_{2, k}<t_{k}$.
\end{proposition}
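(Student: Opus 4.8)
The plan is to locate fixed points of $\mathcal{P}_k$ by tracking the sign of the auxiliary function $g(\alpha):=\mathcal{P}_k(\alpha)-\alpha$ on $(t_{k-1},t_k)$. Since $\mathcal{P}_k$ is continuous by Proposition \ref{pp2}, so is $g$, and every zero of $g$ is a fixed point of $\mathcal{P}_k$. The strategy is to show that $g>0$ near both endpoints $t_{k-1}$ and $t_k$, while $g<0$ at some interior point $\alpha_0$; the intermediate value theorem then furnishes one fixed point in $(t_{k-1},\alpha_0)$ and a second in $(\alpha_0,t_k)$.

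First I would analyse the behaviour near the endpoints. By Lemma \ref{odt} we have $u_\alpha\geq z_\alpha=\psi_\alpha^{-1}(\lambda_1)e_1$ pointwise, whence
$$
  \mathcal{P}_k(\alpha)=\int_\Omega u_\alpha^p\,dx\geq \psi_\alpha^{-1}(\lambda_1)^p\int_\Omega e_1^p\,dx .
$$
Taking $\liminf$ as $\alpha\to t_{k-1}^+$, using the continuity and monotonicity of $t\mapsto t^p$ on $[0,\infty)$ together with Lemma \ref{behhhh}, and then \eqref{f_{4}}, I obtain
$$
  \liminf_{\alpha\to t_{k-1}^+}\mathcal{P}_k(\alpha)\geq\Big(\inf_{\alpha\in(t_{k-1},t_k)}s_\alpha\Big)^p\int_\Omega e_1^p\,dx\geq\Big(\inf_{\alpha\in\mathcal{A}}s_\alpha\Big)^p\int_\Omega e_1^p\,dx>t_K\geq t_k .
$$
Consequently there is $\delta>0$ with $\mathcal{P}_k(\alpha)>t_k>\alpha$ for every $\alpha\in(t_{k-1},t_{k-1}+\delta)$, i.e. $g>0$ there. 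The same computation with $\alpha\to t_k^-$, using the second half of Lemma \ref{behhhh}, shows $g>0$ near $t_k$ as well.

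Next I would produce the interior point at which $g<0$; this is exactly where \eqref{f_{5}} is used. Testing the equation $-\Delta u_\alpha=f(u_\alpha,\alpha)$ against $u_\alpha$ and using $0<u_\alpha\leq s_\alpha$ with $M_\alpha:=\max_{s\in[0,s_\alpha]}f(s,\alpha)\geq 0$ gives $\|u_\alpha\|^2\leq M_\alpha|u_\alpha|_1$. Bounding $|u_\alpha|_1\leq|\Omega|^{1/2}|u_\alpha|_2$ by Cauchy--Schwarz and $|u_\alpha|_2\leq\lambda_1^{-1/2}\|u_\alpha\|$ by Poincaré yields $\|u_\alpha\|\leq M_\alpha|\Omega|^{1/2}\lambda_1^{-1/2}$. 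Combining $\int_\Omega u_\alpha^p\,dx\leq s_\alpha^{p-1}|u_\alpha|_1$ (again from $u_\alpha\leq s_\alpha$) with the Sobolev bound $|u_\alpha|_1\leq C_1\|u_\alpha\|$ gives
$$
  \frac{\mathcal{P}_k(\alpha)}{\alpha}\leq\frac{C_1|\Omega|^{1/2}}{\lambda_1^{1/2}}\,M_\alpha\,\frac{s_\alpha^{p-1}}{\alpha}.
$$
By \eqref{f_{5}} there is $\alpha_0\in(t_{k-1},t_k)$ making the right-hand side strictly less than $1$, so $\mathcal{P}_k(\alpha_0)<\alpha_0$, i.e. $g(\alpha_0)<0$.

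The main obstacle, I expect, is arranging the chain of inequalities in the interior estimate so that the constant matches \eqref{f_{5}} precisely: applying $|u_\alpha|_1\leq C_1\|u_\alpha\|$ twice would produce a spurious $C_1^2$, whereas the sharp bound requires controlling $\|u_\alpha\|$ through the $L^2$-norm (Cauchy--Schwarz and Poincaré, contributing the factor $|\Omega|^{1/2}\lambda_1^{-1/2}$) and reserving the single Sobolev constant $C_1$ for the final passage to $|u_\alpha|_1$. With $g>0$ near both endpoints and $g(\alpha_0)<0$, the continuity of $\mathcal{P}_k$ and the intermediate value theorem deliver the two required fixed points $t_{k-1}<\alpha_{1,k}<\alpha_0<\alpha_{2,k}<t_k$.
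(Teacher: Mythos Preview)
Your proof is correct and follows the same overall architecture as the paper: show $\mathcal{P}_k(\alpha)>\alpha$ near both endpoints via Lemmas~\ref{odt} and~\ref{behhhh} together with \eqref{f_{4}}, produce an interior point with $\mathcal{P}_k(\alpha_0)<\alpha_0$ from \eqref{f_{5}}, and conclude with the intermediate value theorem and Proposition~\ref{pp2}. The one substantive difference lies in how you obtain the interior upper bound. The paper introduces an auxiliary function $w_\alpha$ solving $-\Delta w_\alpha=u_\alpha^{p-1}$ in $\Omega$, $w_\alpha=0$ on $\partial\Omega$, observes that $\mathcal{P}_k(\alpha)=\int_\Omega\widehat f_\alpha(u_\alpha)w_\alpha\,dx$ by two integrations by parts, and then estimates $\|w_\alpha\|\le\lambda_1^{-1/2}s_\alpha^{p-1}|\Omega|^{1/2}$ to arrive at exactly the same final inequality you reach. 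Your route bypasses this auxiliary problem entirely: testing the equation for $u_\alpha$ against $u_\alpha$ itself, combined with $u_\alpha\le s_\alpha$, Cauchy--Schwarz, Poincar\'e, and one application of the $H_0^1\hookrightarrow L^1$ embedding, yields the identical bound $\mathcal{P}_k(\alpha)\le M_\alpha C_1\lambda_1^{-1/2}s_\alpha^{p-1}|\Omega|^{1/2}$. Your derivation is more elementary and self-contained; the paper's approach, while slightly longer, makes the duality structure (pairing $u_\alpha^{p-1}$ with the solution operator) explicit, which can be advantageous if one later wants to sharpen the constants or treat more general operators.
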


\begin{proof}

  We start with two claims describing the behaviour of $\mathcal{P}_{k}$.

\medskip

\noindent {\bf Claim 1:}
  $\lim\limits_{\alpha\to t_{k-1}^{+}}\mathcal{P}_{k}(\alpha)>t_{k-1}$ and $\lim\limits_{\alpha\to t_{k}^{-}}\mathcal{P}_{k}(\alpha)>t_{k}$.

\medskip

  From Lemma \ref{odt}, we have
$$
  \mathcal{P}_{k}(\alpha)\geq (\psi_{\alpha}^{-1}(\lambda_{1}))^{p}\int_{\Omega}e_{1}^{p}dx\quad \forall \ \alpha\in (t_{k-1}, t_{k}).
$$
  Hence, by condition \eqref{f_{4}} and Lemma \ref{behhhh}, we get
\begin{eqnarray*}
  \liminf\limits_{\alpha\to t_{k-1}^{+}}\mathcal{P}_{k}(\alpha) & \geq & \left(\inf\limits_{\alpha\in (t_{k-1}, t_{k})}s_{\alpha}\right)^{p}\int_{\Omega}e_{1}^{p}dx>t_{K}>t_{k-1},\\
  \liminf\limits_{\alpha\to  t_{k}^{-}}\mathcal{P}_{k}(\alpha) & \geq & \left(\inf\limits_{\alpha\in (t_{k-1}, t_{k})}s_{\alpha}\right)^{p}\int_{\Omega}e_{1}^{p}dx>t_{K}>t_{k}.
\end{eqnarray*}

\medskip

\noindent {\bf Claim 2:}
  There exists $\alpha_0\in (t_{k-1}, t_{k})$ such that $\mathcal{P}_{k}(\alpha_0)<\alpha_0$.

\medskip

  For each $\alpha\in (t_{k-1}, t_{k})$, let $w_{\alpha}$ be the unique solution (which is positive) of the problem
\begin{equation*}
  \left \{ \begin{array}{ll}
  -\Delta u = u_{\alpha}^{p-1} & \mbox{in $\Omega$,}\\
  u=0 & \mbox{on $\partial\Omega$,}
  \end{array}\right.
\end{equation*}
  where $u_{\alpha}$ is the unique positive solution of \eqref{AP}. Hence, multiplying by $u_{\alpha}$ and integrating by parts, we have
$$
  \int_{\Omega}\nabla w_{\alpha}\nabla u_{\alpha} dx=\int_{\Omega}u_{\alpha}^{p}dx=\mathcal{P}_{k}(\alpha).
$$
  On the other hand, by the definition of $u_{\alpha}$, we get
\begin{equation}\label{12}
  \mathcal{P}_{k}(\alpha)=\int_{\Omega}\widehat{f}_{\alpha}(u_{\alpha})w_{\alpha} dx
\end{equation}
  thus,
\begin{equation}\label{16}
  \mathcal{P}_{k}(\alpha)\leq\left(\max\limits_{s\in [0, s_{\alpha}]}\widehat{f}_{\alpha}(s)\right)C_{1}\|w_{\alpha}\|,
\end{equation}
  (where $C_{1}>0$ is the best constant of the Sobolev embedding from $H_{0}^{1}(\Omega)$ into $L^{1}(\Omega)$).

  From the definition of $w_{\alpha}$, the fact that $0<u_{\alpha}\leq s_{\alpha}$ and H\"older's inequality, we obtain
\begin{equation}\label{15}
  \|w_{\alpha}\|\leq  \frac{1}{\sqrt{\lambda_{1}}}\left(\int_{\Omega}u_{\alpha}^{2(p-1)}dx\right)^{1/2}\leq  \frac{1}{\sqrt{\lambda_{1}}}s_{\alpha}^{p-1}|\Omega|^{1/2}.
\end{equation}
  Applying \eqref{15} in \eqref{16}, we obtain
$$
  \mathcal{P}_{k}(\alpha)\leq \left(\max\limits_{s\in [0, s_{\alpha}]}\widehat{f}_{\alpha}(s)\right)
  \frac{C_{1}}{\sqrt{\lambda_{1}}}s_{\alpha}^{p-1}|\Omega|^{1/2}\quad \forall \
  \alpha\in (t_{k-1}, t_{k}).
$$
  Using condition \eqref{f_{5}} we get the conclusion of Claim 2.

\medskip

  From the continuity of $\mathcal{P}_{k}$ (see Proposition \ref{pp2}),
  Claims 1 and 2 and the intermediate value theorem for continuous real functions,
  we conclude that $\mathcal{P}_{k}$ has at least two fixed points
  $\alpha_{1,k}$ and $\alpha_{2,k}$ in the interval $(t_{k-1},t_{k})$.
  The conclusions of Claims 1 and 2 are illustrated in Figure 1.

\begin{figure}[h]
\begin{center}
\begin{picture}(300,250)(0,0)
 \put(0,30){\vector(1,0){280}} 
 \put(30,0){\vector(0,1){250}}  
  \multiput(10,10)(2,2){115}{\circle*{0.01}}

 \multiput(80,30)(0,4){55}{\line(0,2){1}}
 \multiput(200,30)(0,4){55}{\line(0,2){1}}

 \multiput(102,30)(0,2){36}{\line(0,1){1}}
    \put(102,102){\circle*{3}}
 \multiput(126,30)(0,2){10}{\line(0,1){1}}
    \put(126,49){\circle*{3}}
 \multiput(175,30)(0,2){72}{\line(0,1){1}}
    \put(175,175){\circle*{3}}

 \bezier{3000}(84,250)(110,-70)(160,120)
 \bezier{3000}(160,120)(180,200)(200,240)
   \color{white}\put(200,240){\circle*{3}}
   \color{black}\put(200,240){\circle{3}}

  \put(80,26){\makebox(0,0)[ct]{{\large $t_{k-1}$}}}
  \put(102,24){\makebox(0,0)[ct]{{\large $\alpha_{1,k}$}}}
  \put(126,24){\makebox(0,0)[ct]{{\large $\alpha_0$}}}
  \put(175,24){\makebox(0,0)[ct]{{\large $\alpha_{2,k}$}}}
  \put(202,26){\makebox(0,0)[ct]{{\large $t_k$}}}

  \put(268,34){\makebox(0,0)[lb]{{\large $\alpha$}}}

  \put(25,238){\makebox(0,0)[rc]{{\large $y$}}}

  \put(140,238){\makebox(0,0)[rc]{{\large $y=P_k(\alpha)$}}}
  \put(265,230){\makebox(0,0)[rc]{\large $y=\alpha$}}

\end{picture}

\end{center}
\caption{Properties of $\mathcal{P}_{k}$ in $(t_{k-1},t_k)$ due to Claims 1 and 2.}
\end{figure}
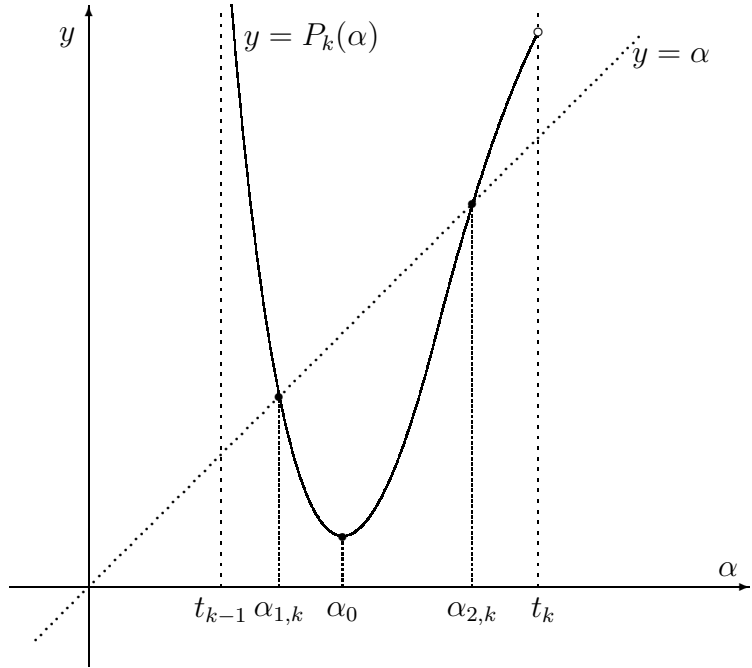

\end{proof}

\subsection{Conclusion of the proof of Theorem \ref{main1}}\label{subsec:final}

\medskip

  From Propositions \ref{pro1} and \ref{pp3} it follows
  that problem \eqref{P1} has two classical positive solutions $u_{k, 1}$
  and $u_{k, 2}$ such that
$$
  t_{k-1}<\int_{\Omega}u_{k, 1}^{p}dx<\int_{\Omega}u_{k, 2}^{p}dx<t_{k},
$$
  for any fixed $k\in \{1, \ldots, K\}$.
  This finishes the proof.\hspace*{\fill} $\square$

\section{Examples of nonlinearities $f$}\label{se:exam}\label{sec:Final}

  Let us provide some examples of function $f$ satisfying hypotheses of Theorem \ref{main1}

\medskip

\noindent
 \textbf{(a)}
  Let us take an integer $K\geq 1$,
  $U\geq (K/\int_{\Omega}e_1^pdx)^{1/p}$
  and let
  $S\colon [0,K]\to (U,+\infty)$
  be a $C^1$-function.
  We put
\[
  \ol{S}:=\max\limits_{t\in [0,K]}S(t),\quad
  \ul{S}:=\min\limits_{t\in [0,K]}S(t)>U,
\]
  and
\[
  M:=\lambda_{1}^{1/2}/(2C_{1}|\Omega|^{1/2})
 .
\]
  Let $L\colon \R\to [0,\infty)$ be a continuous function such that
  $L(w)=0$ for $w\leq 0$, $L$ is increasing and $C^1$ on $(0,\infty)$, $\lim\limits_{w\to\infty}L(w)=+\infty$,
  $L(\ul{S})>\lambda_1$, and $\max\limits_{s\in [0,\ol{S}]}s\cdot L(\ol{S}-s)<M/\ol{S}^{p-1}$.

  Now, defining
\begin{equation}\label{ex_f}
  f(s,t):=s\cdot L\bigg(\frac{S(t)-s}{|\sin\pi t|}\bigg),
\end{equation}
  we will show that $f$ satisfies hypothesis \eqref{f_{0}}-\eqref{f_{5}}, with
  $t_i=i$ for $i\in\{0,1,\ldots,K\}$ and $s_{\alpha}=S(\alpha)$.

  Indeed, $f\colon\R\times\mathcal{A}\to\R$, where
  $\mathcal{A}=(0,\infty)\setminus \{1,\ldots,K\}$,
  $f(\cdot,\alpha)\in C^1(\R)$ for all $\alpha\in \mathcal{A}$,
  $f(s_{\alpha},\alpha)=0$,
  $f(s,\alpha)>0$ for $s\in (0,s_{\alpha})$. Thus hypothesis \eqref{f_{0}} holds.
  By the assumptions on $L$, it is also obvious that \eqref{f_{1}} is valid.

  Next, we have
\[
  \frac{f(s,\alpha)}{s}=L\bigg(\frac{s_{\alpha}-s}{|\sin\pi\alpha|}\bigg)
\]
  and so we see that the map $(0, s_{\alpha})\ni s\mapsto f(s, \alpha)/s$ is decreasing (as $L$ is increasing), hence \eqref{f_{2}} holds.

  For any $k\in \{1,\ldots, K\}$ and $\alpha\in (t_{k-1},t_k)$, we have
\[
  \gamma_{\alpha}
  =\lim\limits_{s\to 0^{+}}\frac{f(s, \alpha)}{s}
  =\lim\limits_{s\to 0^{+}}L\bigg(\frac{s_{\alpha}-s}{|\sin\pi\alpha|}\bigg)
  =L\bigg(\frac{s_{\alpha}}{|\sin\pi\alpha|}\bigg)
  \geq L(s_{\alpha})
  \geq L(\ul{S})
  >\lambda_1,
\]
  thus hypothesis \eqref{f_{3}} holds.

  Hypothesis \eqref{f_{4}} 
  is satisfied, by the assumptions on the map $S$ (see the definition of $U$).

  Finally, for a fixed $k\in\{1, \ldots, K\}$, we have
\begin{eqnarray*}
  \inf\limits_{\alpha\in (t_{k-1}, t_{k})}\max\limits_{s\in[0, s_{\alpha}]} \frac{f(s,\alpha)s_{\alpha}^{p-1}}{\alpha}
  &=&
  \inf\limits_{\alpha\in (k-1, k)}\max\limits_{s\in[0, s_{\alpha}]}\frac{ss_{\alpha}^{p-1}}{\alpha}L\bigg(\frac{S(\alpha)-s}{|\sin\pi\alpha|}\bigg)\\
  & \leq &
    \max\limits_{s\in[0, \ol{S}]}\frac{s\ol{S}^{p-1}}{k-\frac{1}{2}}L\bigg(\frac{S(k-\frac{1}{2})-s}{|\sin\pi/2|}\bigg) \\
  &\leq&
    2\ol{S}^{p-1}\max\limits_{s\in[0, \ol{S}]}s L(\ol{S}-s)\\
  & < &
    2\ol{S}^{p-1}M/\ol{S}^{p-1}
  =
   2M
  =
   \frac{\lambda_{1}^{1/2}}{C_{1}|\Omega|^{1/2}}
\end{eqnarray*}
  thus hypothesis \eqref{f_{5}}
  holds.

\medskip

\noindent
 \textbf{(b)}
  Let us now provide some particular examples of functions $S$ and $L$ satisfying the above assumptions.
  Take $K$, $U$, and $M$ as defined in \textbf{(a)} and put
 \[
  A:=U+1,\quad
  n>\frac{(\lambda_1+1)(A+1)^p}{M}-1,\quad\textrm{and}\quad
  \beta:=\frac{\lambda_1+1}{A^n}.
 \]
   Next choose $B>A$ such that
 \[
   B<\min\left\{A\sqrt[n]{\frac{(n+1)M}{(\lambda_1+1)(A+1)^p}},\ A+1\right\}.
 \]
  Now we define function $S\colon [0,K]\to (0,+\infty)$, by
\[
  S(t):=\frac{B-A}{K}t+A\quad\forall t\in [0,K],
\]
  and function $L\colon \R\to [0,\infty)$, by
\[
  L(w):=\left\{
  \begin{array}{lll}
    \beta w^n & \textrm{for} & w\geq 0,\\
    0         & \textrm{for} & w<0.
  \end{array}
  \right.
\]
  It is easy to check, that $\ul{S}=A>U$ and $\ol{S}=B$.
  Moreover  the function $L$ is continuous,
  increasing and $C^1$ on $(0,\infty)$, $\lim\limits_{w\to\infty}L(w)=+\infty$,
\[
  L(\ul{S})
  =
  L(A)
  =
  \beta A^n
  =
  \frac{\lambda_1+1}{A^n}A^n
  =
  \lambda_1+1
  >
  \lambda_1,
\]
  and finally the maximum of the function $[0,\ol{S}]\ni s\longmapsto s\cdot L(\ol{S}-s)=\beta s(B-s)^n$
  is attained at $s_{max}=\frac{B}{1+n}$ and so
\begin{eqnarray*}
  \max\limits_{s\in [0,\ol{S}]}s\cdot L(\ol{S}-s)
 & =&
  \beta\frac{B}{1+n}\bigg(\frac{nB}{1+n}\bigg)^n \\
&  =&\frac{\lambda_1+1}{1+n}\bigg(\frac{B}{A}\bigg)^n B \bigg(\frac{n}{1+n}\bigg)^n\\
  & < &
    \frac{\lambda_1+1}{1+n}\frac{(n+1)M}{(\lambda_1+1)(A+1)^p}(A+1)\\
&  =& M/(A+1)^{p-1}
  <
  M/B^{p-1}=M/\ol{S}^{p-1},
\end{eqnarray*}
  by the definition of $B$. Thus both functions $S$ and $L$ fit into the framework of example \textbf{(a)},
  and so the function $f$ defined by \eqref{ex_f} satisfies hypotheses \eqref{f_{0}}-\eqref{f_{5}}.

\medskip

\noindent
 \textbf{(c)}
  For another choice of functions $S$ and $L$ satisfying the assumptions of \textbf{(a)}, take again $K$, $U$, and $M$ as in \textbf{(a)} and put
  $A:=U+1$. Next fix two numbers $B$ and $D$ such that
\[
   A<B<D<\min\left\{A+\frac{MA}{(\lambda_1+1)(A+1)^p},\ A+1\right\}
\]
  and define
\[
  C:=\frac{(\lambda_1+1)(D-A)}{A}.
\]
  Let $S\colon [0,K]\to [A,B]$ be any $C^1$-function.
  Then $\ul{S}\geq A$ and $\ol{S}\leq B$.

  On the interval $[0,B]$ we defined an increasing $C^1$-function
\[
  L(w)=-C\frac{w}{w-D}
\]
  and extend it outside $[0,B]$ in such a way that it is still $C^1$ and increasing.
  First, we have
\[
  L(\ul{S})\ge L(A)=-\frac{(\lambda_1+1)(D-A)}{A}\cdot\frac{A}{A-D}=\lambda_1+1>\lambda_1.
\]
  Next
\[
  \max_{s\in [0,\ol{S}]}s\cdot L(\ol{S}-s)
  \leq
    \max_{s\in [0,B]}s\cdot L(B-s)
  =
    \max_{s\in [0,B]} -Cs\frac{B-s}{B-s-D}
  =
    \max_{s\in [0,B]} \frac{Cs(B-s)}{s+(D-B)}.
\]
  The last maximum is attained at
  $s_{max}=\sqrt{D(D-B)}-(D-B)\in (0,B)$,  with the value
\begin{eqnarray*}
  \max_{s\in [0,B]} \frac{Cs(B-s)}{s+(D-B)}
    &=&\frac{C(\sqrt{D(D-B)}-(D-B))(D-\sqrt{D(D-B)})}{\sqrt{D(D-B)}}\\
  & = &
    \frac{(\lambda_1+1)(D-A)}{A}\bigg(1-\sqrt{\frac{D-B}{D}}\bigg)(D-\sqrt{D(D-B)})\\
  &\le&
    \frac{(\lambda_1+1)}{A}(D-A)D\\
  & < &
    \frac{(\lambda_1+1)}{A}\frac{MA}{(\lambda_1+1)(A+1)^p}(A+1)\\
  &=&\frac{M}{(A+1)^{p-1}}
  \leq\frac{M}{B^{p-1}}
  \leq\frac{M}{\ol{S}^{p-1}}.
\end{eqnarray*}
  Thus both functions $S$ and $L$ fit into the framework of example \textbf{(a)}.

\end{document}